\theoremstyle{plain}
\newtheorem{Theorem}{Theorem}[section]
\newtheorem{Main Theorem}{Main Theorem}
\newtheorem{Lemma}[Theorem]{Lemma}
\newtheorem{Corollary}[Theorem]{Corollary} 
\newtheorem{Proposition}[Theorem]{Proposition}
\newtheorem{Question}{Question}
\newtheorem{Problem}{Problem}
\theoremstyle{definition}
\newtheorem{Definition}[Theorem]{Definition}
\newtheorem{Example}[Theorem]{Example}
\newtheorem{Remark}[Theorem]{Remark}
\newtheorem{Discussion}[Theorem]{Discussion}
\theoremstyle{remark}
\newtheorem*{chunk*}{}
\numberwithin{equation}{Theorem}
\newcommand{\mlabel}[1]%
{\mbox{}\marginpar{\raggedleft\hspace{0pt}{\rm\ttfamily#1}}\label{#1}}
\newcommand{\Ker}{\operatorname{Ker}}
\newcommand{\Spec}{\operatorname{Spec}}
\newcommand{\fm}{{\mathfrak m}}
\newcommand{\fp}{{\mathfrak p}}
\newcommand{\fq}{{\mathfrak q}}
\newcounter{hours}\newcounter{minutes}
\newcommand{\excise}[1]{}
\begin{document}
\title[F-coherent rings with applications to tight closure theory]
{F-coherent rings with applications to tight closure theory}
\author[K.Shimomoto]{Kazuma Shimomoto}

\email{shimomotokazuma@gmail.com}

\thanks{2000 {\em Mathematics Subject Classification\/}: 13A35, 13H10}

\keywords{Coherent ring,~Frobenius~map,~perfect~ring,~tight~closure.}

\maketitle

\begin{abstract}
The aim of this paper is to introduce a new class of Noetherian rings of positive characteristic in terms of perfect closures and study their basic properties. If the perfect closure of a Noetherian ring is coherent, we call it an $F$-coherent ring. Some interesting applications are given in connection with tight closure theory. In particular, we discuss relationships between $F$-coherent rings and $F$-pure, $F$-regular, and $F$-injective rings. The final section discusses how the coherent property effects the behavior of tight closure for general perfect rings.
\end{abstract}

\bigskip

\section {Introduction}

In the present paper, all rings are assumed to be commutative and unitary. Recall that a ring is $\textit{coherent}$ if every finitely generated ideal is finitely presented. This condition is automatic for all Noetherian rings. We shall consider the following problem in connection with tight closure theory.

\begin{Problem}
Let $R$ be a Noetherian ring of positive characteristic and let $R^{\infty}$ be its perfect closure. Then what is characteristic of $R$ if $R^{\infty}$ is a coherent ring?
\end{Problem}

Tight closure theory was created in the mid 80's by Hochster and Huneke as a powerful tool with applications to many outstanding questions regarding rings of positive characteristic. In fact, a modification of the above problem has a connection with the problem whether tight closure commutes with localization, or not. A typical attempt to establish the localization problem for tight closure was made by its comparison with plus closure. More precisely, if it were true that $R^{+}$ (this notation will be explained later) is coherent, then it would follow that tight closure commutes with localization. Unfortunately, it turns out that this is false, as was shown by the recent work of Brenner and Monsky~\cite{BM}. Except the case when $R$ is a field, the perfect ring $R^{\infty}$ is almost never Noetherian, but it can be better-behaved than $R$ itself. As we shall see below, perfect coherent rings of certain type force their finitely generated ideals to be tightly closed. This result relies on the flatness of the Frobenius map on perfect rings. Also, there are non-trivial cases when the perfect closures of rings, much smaller than $R^{+}$, are not coherent. In this respect, it would be adequate to call a Noetherian ring of positive characteristic $\textit{F-coherent}$ if its perfect closure is coherent. Especially, a subclass of affine semigroup rings forms an interesting class of $F$-coherent rings.

These observations also raise the following question. Let $(R,\fm)$ be a local Noetherian ring of positive characteristic. Then under which condition of $R$ is a system of parameters of $R$ a regular sequence on the perfect closure $R^{\infty}$, or is $R^{\infty}$ a big Cohen-Macaulay $R$-algebra?

We prove by using a valuation, that if $R$ is $F$-coherent, then every system of parameters of $R$ is a regular sequence on $R^{\infty}$. This result is seen to be related to the recent work by Roberts, Singh, and Srinivas. Quite generally, if $R$ is any complete local domain of positive characteristic, then $R^{\infty}$ is almost Cohen-Macaulay (\cite{RSS} for this terminology). Note, however, that even if $R^{\infty}$ is almost Cohen-Macaulay, it does not necessarily follow that $R$ is Cohen-Macaulay. Finally, we mention that homological aspects of perfect closures, as well as absolute integral closures of Noetherian domains are studied by Asgharzadeh~\cite{Asg}

\section{Preliminaries on tight closure}

Let $R$ be a ring of characteristic $p>0$. Let $F^{e}_{R}:R \to R^{(e)}$ be the $e$-th iterated Frobenius map in which $R^{(e)}=R$ as a left $R$-module, while the right $R$-module structure of $R^{(e)}$ is provided by $a \cdot r=a F^{e}(r)$. The $\textit{Frobenius}$ ($\textit{Peskine-Szpiro}$) $\textit{functor}$ $\mathbf{F}_{R}^{e}(-)$ is defined by $\mathbf{F}_{R}^{e}(M)=R^{(e)} \otimes_{R} M$ for any $R$-module $M$.

Let us denote by $R^{0}$ the complement of the set of all minimal primes of $R$ and let $I$ be an ideal. Then we define $I^{[q]}$ as an ideal generated by $q=p^{e}$-powers of all elements of $I$. Then the $\textit{tight closure}$ $I^{*}$ of $I$ is the set of $x \in R$ such that there exists $c \in R^{0}$ for which $cx^{q} \in I^{[q]}$ for $q=p^{e} \gg 0$. It is easy to see that $I^{*}$ is an ideal containing $I$. An ideal $I$ is $\textit{tightly closed}$ if $I=I^{*}$. A Noetherian ring $R$ is called $\textit{weakly F-regular}$ if every ideal of $R$ is tightly closed. $R$ is called $\textit{F-regular}$ if every localization of $R$ is weakly $F$-regular.

The $\textit{perfect closure}$ of a ring $R$ is defined by adjoining to $R_{\mathrm{red}}$ all higher $p$-power roots of all elements of $R_{\mathrm{red}}$ and denote it by $R^{\infty}$. We also use the following notation. For a ring $R$, we let $R^{1/q}$ denote $(R_{\mathrm{red}})^{1/q}$. The $\textit{Frobenius closure}$ of $I$ is defined as $I^{F}=I R^{\infty} \cap R$. An alternative definition is that $x \in I^{F}$ if and only if $x^{q} \in I^{[q]}$ for all $q=p^{e} \gg 0$.  It is obviously true that $I^{F} \subseteq I^{*}$. A Noetherian ring $R$ is called $\textit{F-pure}$ if $I=I^{F}$ for every ideal $I$ of $R$. If $(R,\fm)$ is a local Noetherian ring of characteristic $p > 0$, then the Frobenius map on $R$ naturally induces a map on the \v Cech complex of $R$, hence it gives the Frobenius action on the local cohomology modules $H^{k}_{\fm}(R) \to H^{k}_{\fm}(R^{(1)})$. $(R,\fm)$ is $\textit{F-injective}$ if the map $H^{k}_{\fm}(R) \to H^{k}_{\fm}(R^{(1)})$ is injective for all $0 \le k \le \dim R$. If $(R,\fm)$ is Cohen-Macaulay, then $R$ is $F$-injective if and only if $I=I^{F}$ for any parameter ideal $I$ of $R$. Here $I$ is a $\textit{parameter ideal}$ if $I$ is generated by a system of parameters. Then it holds that $F$-purity implies $F$-injectivity.

The $\textit{absolute integral closure}$ of an integral domain $R$ is the integral closure of $R$ in an algebraic closure of the field of fractions of $R$ and denote it by $R^{+}$. Then clearly $R^{\infty} \subseteq R^{+}$. 

The most part of tight closure theory is concerned with Noetherian rings, while the definition of tight closure itself, at least, makes sense for any ring of characteristic $p > 0$. We will use only the definition to derive the property of coherence on certain perfect rings.

\section{$F$-coherent rings}

A ring $R$ is $\textit{coherent}$ if its every finitely generated ideal is finitely presented. An equivalent definition for $R$ to be coherent is that both $(0:a)$ and $I \cap J$ are finitely generated ideals for any $a \in R$ and finitely generated ideals $I, J \subseteq R$. Another equivalent definition is that every finitely generated submodule of every finitely presented module over $R$ is finitely presented over $R$ (\cite{Glaz}, Theorem 2.3.2 for these definitions). We will use them interchangeably. Let us begin with the definition of $F$-coherent rings and establish some basic properties.

\begin{Definition}
Let $R$ be a Noetherian ring of characteristic $p>0$. Then we say that $R$ is $F$-$\textit{coherent}$ if the perfect closure $R^{\infty}$ is coherent.
\end{Definition}

A simple observation shows that $R$ is $F$-coherent if and only if $R_{\mathrm{red}}$ is $F$-coherent. In particular, we may assume that $R$ is always reduced. In the following, when we say a $\textit{flat colimit}$ of commutative rings, it means a direct system $\{R_{\alpha}\}_{\alpha \in \Lambda}$ such that, whenever $\alpha < \beta$, the transition map $R_{\alpha} \to R_{\beta}$ is flat in the usual sense.

\begin{Proposition}
\label{coherent-1}
Let $R$ be a Noetherian ring of characteristic $p>0$. 

\begin{enumerate}
\item[$\mathrm{(1)}$]
Any regular ring is $F$-coherent.

\item[$\mathrm{(2)}$]
Let $S$ be a multiplicative subset of an $F$-coherent ring $R$. Then the localization $S^{-1} R$ is $F$-coherent as well.

\item[$\mathrm{(3)}$]
If $U=\{\fp \in \Spec R~|~R_{\fp}~\mbox{is $F$-coherent}\}$ is constructible, then $U$ is a non-empty Zariski open subset.

\item[$\mathrm{(4)}$]
Let $R \subseteq S$ be a faithfully flat extension. Then if $S$ is $F$-coherent, so is $R$.
\end{enumerate}
\end{Proposition}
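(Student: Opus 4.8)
The plan is to treat the four parts in order, in each case reducing to standard facts about coherent rings together with the description of the perfect closure as a filtered colimit $R^{\infty} = \varinjlim_{e} R^{1/p^{e}}$ of the $p^{e}$-th root rings. The one structural observation I would use throughout is that, after identifying $R^{1/p^{e}} \cong R$ by the $p^{e}$-th root isomorphism, the transition map $R^{1/p^{e}} \into R^{1/p^{e+1}}$ becomes the Frobenius $F\colon R \to R$, and the inclusion $R^{1/p^{e}} \into S^{1/p^{e}}$ induced by $R \subseteq S$ becomes (conjugate to) the original map $R \to S$. For part (1), I would invoke Kunz's theorem: a Noetherian ring of characteristic $p$ is regular if and only if its Frobenius is flat. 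When $R$ is regular, every transition map in $\{R^{1/p^{e}}\}$ is therefore flat, so $R^{\infty}$ is a \emph{flat colimit} of the rings $R^{1/p^{e}} \cong R$, each Noetherian and hence coherent; the theorem that a flat colimit of coherent rings is coherent (\cite{Glaz}) then gives that $R^{\infty}$ is coherent.

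For part (2), the essential point is that the perfect closure commutes with localization: since localization is exact and commutes with filtered colimits, and since inverting $s$ coincides with inverting $s^{1/p^{e}}$ in a reduced ring, one gets $(S^{-1}R)^{\infty} = S^{-1}(R^{\infty})$. A localization of the coherent ring $R^{\infty}$ is again coherent, so $(S^{-1}R)^{\infty}$ is coherent. Part (3) then follows formally: $U$ is non-empty because every minimal prime $\fp$ lies in it, as $(R_{\fp})_{\mathrm{red}}$ is a field, hence regular, hence $F$-coherent by (1), while $F$-coherence depends only on the reduction; and $U$ is stable under generization, because if $\fq \subseteq \fp$ with $\fp \in U$ then $R_{\fq}$ is a localization of $R_{\fp}$ and so is $F$-coherent by (2). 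A constructible subset of the Noetherian space $\Spec R$ that is stable under generization is open, so the hypothesis that $U$ is constructible forces $U$ to be a non-empty Zariski open set.

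Part (4) has two steps: first that $R^{\infty} \to S^{\infty}$ is faithfully flat, and then that coherence descends along it. At each finite level the map $R^{1/p^{e}} \to S^{1/p^{e}}$ is conjugate to $R \to S$, hence faithfully flat. One cannot conclude by base change, since the perfect closure does not commute with the extension, but flatness passes to the colimit by the equational criterion: any relation $\sum_{i} \alpha_{i}\beta_{i} = 0$ in $S^{\infty}$ with $\alpha_{i} \in R^{\infty}$ already holds in some $S^{1/p^{e}}$ with all $\alpha_{i} \in R^{1/p^{e}}$, where flatness of $R^{1/p^{e}} \to S^{1/p^{e}}$ trivializes it, and the trivializing data maps back into $R^{\infty}, S^{\infty}$. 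For faithfulness, if some maximal ideal $\fm$ of $R^{\infty}$ satisfied $1 \in \fm S^{\infty}$, that relation would already hold at a finite level, contradicting the faithful flatness of $R^{1/p^{e}} \to S^{1/p^{e}}$, under which a proper ideal cannot expand to the unit ideal. Granting faithful flatness, I would descend coherence via the annihilator and intersection criterion: for $a \in R^{\infty}$ and finitely generated ideals $I, J$, flat base change gives $(0 :_{R^{\infty}} a)\otimes_{R^{\infty}} S^{\infty} = (0 :_{S^{\infty}} a)$ and $(I \cap J)\otimes_{R^{\infty}} S^{\infty} = I S^{\infty} \cap J S^{\infty}$, both finitely generated over the coherent ring $S^{\infty}$; since finite generation descends along faithfully flat maps, $(0 :_{R^{\infty}} a)$ and $I \cap J$ are finitely generated, so $R^{\infty}$ is coherent.

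I expect the faithful flatness of $R^{\infty} \to S^{\infty}$ to be the main obstacle, precisely because the perfect closure fails to commute with the faithfully flat base change $R \to S$; consequently the colimit argument through the finite levels, combined with the equational criterion for flatness and a level-wise check of faithfulness, is genuinely needed and cannot be replaced by a formal base-change computation. Parts (1)--(3) are comparatively routine once Kunz's theorem, the flat-colimit stability of coherence, the stability of coherence under localization, and the openness criterion for constructible generization-stable sets are available.
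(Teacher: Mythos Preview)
Your argument is correct and matches the paper's approach in all four parts: Kunz's theorem together with the flat-colimit stability of coherence for (1), the identification $(S^{-1}R)^{\infty}\simeq S^{-1}(R^{\infty})$ and stability of coherence under localization for (2), generization-stability plus the constructibility hypothesis for (3), and descent of the annihilator/intersection criterion for coherence along a faithfully flat map for (4). On part (4) you are in fact more explicit than the paper: the paper records the descent of finite generation along faithfully flat maps and invokes flat base change for $(0:a)$ and $I\cap J$, but does not spell out why $R^{\infty}\to S^{\infty}$ inherits faithful flatness from $R\to S$; your level-wise argument via the equational criterion and the finite-level check that proper ideals cannot expand to the unit ideal fills that step in correctly.
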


\begin{proof}
For $(1)$, we note that $R \to R^{1/p} \to \cdots \to R^{1/p^e} \to \cdots$ is a direct system with flat transition maps by using a theorem of Kunz~\cite{Ku}, so that $R^{\infty}$ is coherent since the flat colimit of coherent rings is coherent.

For $(2)$, let $S$ be a multiplicative subset of $R$. We show that there is an isomorphism: $(S^{-1} R)^{\infty} \simeq S^{-1}(R^{\infty})$. Since the problem depends only on $R_{\mathrm{red}}$, we may assume that $R$ is already reduced. Then the natural map $R \to R^{\infty}$ extends to an injection $S^{-1}R \to S^{-1}(R^{\infty})$. Since this map is purely inseparable and the localization of a perfect ring is again perfect, the required isomorphism follows. On the other hand, the property of coherence is stable under localization, so the isomorphism implies that $(S^{-1}R)^{\infty}$ is also coherent.

For $(3)$, we pick $\fp, \fq \in \Spec R$ such that $\fp \subseteq \fq$ and $\fq \in U$. Then since the localization of an $F$-coherent ring is $F$-coherent, it follows that $\fp \in U$, which implies that $U$ is stable under generization. Hence $U$ is open by assumption. That $U$ is non-empty follows from the fact that the total ring of fractions of $R_{\mathrm{red}}$ is the finite product of fields, which is obviously $F$-coherent.

For $(4)$, we prove the following fact: Let $R \subseteq S$ be a faithfully flat extension and let $M$ be an $R$-module. Then $M$ is finitely generated over $R$ if and only if $M \otimes_{R} S$ is finitely generated over $S$. The ``only if'' part is obvious. Conversely, assume that the finite set of elements $s_{1},\ldots,s_{n}$ of $M$ generates the $S$-module $M \otimes_{R} S$ under the inclusion $M \to M \otimes_{R} S$. Let $N$ be an $R$-submodule of $M$ that is generated by $s_{1},\ldots,s_{n}$. Then the exact sequence: $0 \to N \to M \to M/N \to 0$ gives that $(M/N) \otimes_{R} S=0$, hence $M/N=0$ by faithful flatness of $R \to S$ and this implies that $M$ is generated by $s_{1},\ldots,s_{n}$. We may then apply this fact together with (\cite{Mat}, Theorem 7.4) to conclude that $(0:_{R}a)$ and $I \cap J$ are both finitely generated for finitely generated ideals $I$, $J$ and $a \in R$. So if $S$ is $F$-coherent, so is $R$.
\end{proof}

It is not clear as to what to expect on the topology of the $F$-coherent locus of $\Spec R$ in general. For example, if $R$ is an excellent domain, then the $F$-coherent locus contains an open subset, since the regular locus is open.

\begin{Remark}
As a natural extension of the argument used to show that a regular ring is $F$-coherent, it can be shown that the flat colimit of $F$-coherent rings is $F$-coherent, if the colimit is Noetherian. In fact, this follows easily from the fact that the perfect closure of a ring $R$ of characteristic $p>0$ is obtained as the colimit: 
$$
\begin{CD}
\varinjlim \{R @>F>> R @>F>> R @>F>> \cdots \},
\end{CD}
$$
where the map is the Frobenius map, together with the fact that if the map from one direct system to another system is flat, its colimit is also flat.
\end{Remark}

Although the next corollary is simple, it serves as a useful way for producing sufficiently many $F$-coherent rings.

\begin{Corollary}
\label{coherent-3}
Let $R \to S$ be a purely inseparable extension of Noetherian rings. Then $R$ is $F$-coherent if and only if $S$ is so. In particular, if $R$ is a purely inseparable extension, or subextension of a polynomial algebra over a field of characteristic $p>0$, it is $F$-coherent.
\end{Corollary}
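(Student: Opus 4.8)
The plan is to reduce the entire statement to one observation: a purely inseparable extension induces an \emph{isomorphism} of perfect closures, $R^{\infty}\cong S^{\infty}$. Once this is in hand, $F$-coherence of $R$ and of $S$ become literally the same condition, and the biconditional is immediate. First I would replace $R$ and $S$ by $R_{\mathrm{red}}$ and $S_{\mathrm{red}}$, which is harmless because $F$-coherence depends only on the reduced ring and because $R_{\mathrm{red}}\to S_{\mathrm{red}}$ is again purely inseparable. After this reduction the map is injective, so I may regard $R\subseteq S$ as reduced rings, form the reduced perfect ring $S^{\infty}$, and carry out all of the remaining work inside it.

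Inside $S^{\infty}$, which is reduced and perfect, every element has a unique $p^{e}$-th root. Hence for each $e$ the set $R^{1/p^{e}}=\{x\in S^{\infty}: x^{p^{e}}\in R\}$ is a well-defined subring, and $R^{\infty}=\bigcup_{e}R^{1/p^{e}}$ is the smallest perfect subring of $S^{\infty}$ containing $R$. Now I would invoke pure inseparability: given $s\in S$ there is an $e$ with $s^{p^{e}}\in R$, and since $s$ is the unique $p^{e}$-th root of $s^{p^{e}}$ in $S^{\infty}$, it lies in $R^{1/p^{e}}\subseteq R^{\infty}$. Thus $S\subseteq R^{\infty}$, and because $R^{\infty}$ is perfect this forces $S^{\infty}\subseteq R^{\infty}$. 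The reverse inclusion $R^{\infty}\subseteq S^{\infty}$ is immediate from $R\subseteq S$, so $R^{\infty}=S^{\infty}$. As coherence of this common ring is a single property, $R$ is $F$-coherent if and only if $S$ is.

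For the ``in particular'' clause I would simply feed a polynomial algebra into the equivalence just proved. A polynomial algebra $A=k[x_{1},\dots,x_{n}]$ over a field of characteristic $p>0$ is regular, hence $F$-coherent by Proposition~\ref{coherent-1}(1). If $R$ is a purely inseparable extension of $A$, I apply the equivalence to the pair $A\to R$; if instead $R$ is a subring over which $A$ is purely inseparable, I apply it to $R\to A$. In either case one of the two rings is the regular ring $A$, so its $F$-coherence propagates to $R$.

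The step I expect to be the main obstacle is the clean identification $R^{\infty}=S^{\infty}$: the perfect closures are genuinely non-Noetherian objects, and the argument hinges on realizing \emph{both} of them as subrings of the single ambient perfect ring $S^{\infty}$ and on the uniqueness of $p$-power roots there. Care is also needed to confirm that the only role of the Noetherian hypothesis is to make ``$F$-coherent'' meaningful for $R$ and $S$ individually; the isomorphism of perfect closures itself uses nothing beyond pure inseparability and reducedness.
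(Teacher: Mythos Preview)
Your proposal is correct and follows exactly the route the paper has in mind: the paper's own proof consists of the single line ``immediate from the definition,'' and what you have written is precisely the unpacking of that line---a purely inseparable extension induces an identification $R^{\infty}=S^{\infty}$, so $F$-coherence of one is literally $F$-coherence of the other, and the ``in particular'' clause then follows from Proposition~\ref{coherent-1}(1). There is nothing to add or correct.
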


\begin{proof}
The proof of this corollary is immediate from the definition.
\end{proof}

\begin{Example}
\label{coherent-2}
Affine semigroup rings provide non-trivial (non-regular) examples of $F$-coherent rings. They even include rings which are neither Cohen-Macaulay, nor normal. Let $k$ be any field of characteristic $2$ and let $R=k[x^{4},x^{3}y,xy^{3},y^{4}]$. Then $(x^{3}y)^{2}/x^{4}=x^{2}y^{2}$, which is integral over $R$, but is not in $R$. The failure of Cohen-Macaulay property is easy to see. On the other hand, we have $k[x^{4},y^{4}] \subseteq R \subseteq k[x,y]$, which is a tower of purely inseparable extensions, hence $R$ is $F$-coherent. Later on, we prove an easy-to-use criterion in terms of Cohen-Macaulay property to see that $R$ is not $F$-pure. Is the normalization $k[x^{4},x^{3}y,x^{2}y^{2},xy^{3},y^{4}]$ of $R$ an $F$-coherent ring?
\end{Example}

\begin{Discussion}
We shall use a valuative method for deriving various properties for $F$-coherent rings. Let $R$ be any Noetherian domain and let $P$ be its prime ideal. Then there exists a discrete valuation domain $(V,tV)$ such that $R \subseteq V \subseteq K$ and $P=R \cap tV$, where $K$ is the field of fractions of $R$. If $R \subseteq S$ is an integral extension domain, then the valuation $v$ attached to $V$ extends to $S$ with $\mathbb{Z}$ or $\mathbb{Q}$ as its value group. In particular, for some non-zero element $a \in R$, we get $v(a^{1/r})=\frac{1}{r} \cdot v(a)$. In what follows, we shall say that $(V,tV)$ as above is $\textit{attached}$ to the pair $(S,P)$ for the brevity of notation.
\end{Discussion}

A question which naturally arises is, of course, to ask how $F$-coherent rings are related with those rings that are studied in tight closure theory. We occasionally use some basic facts on coherent modules, for which we refer to~\cite{Glaz}.

\begin{Theorem}
\label{coherent-4}
Let $R$ be a reduced Noetherian ring of characteristic $p>0$ with module-finite normalization. If $R$ is $F$-coherent, then $R^{\infty}$ is a normal ring. In particular, for any such an $F$-coherent ring, the normalization $R\ \to \overline{R}$ is purely inseparable.
\end{Theorem}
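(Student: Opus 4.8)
The plan is to reduce both assertions to the single inclusion $\overline{R} \subseteq R^{\infty}$, and then to establish that inclusion by a valuative argument that plays the finiteness forced by coherence against the ``shrinking valuations'' of the higher $p$-power roots of a conductor element. First I would identify the normalization of $R^{\infty}$. Since $R$ is reduced and $\overline{R}$ is its integral closure in the total quotient ring, the Frobenius trick shows that the integral closure of $R^{\infty}$ in its total quotient ring is exactly $\overline{R}^{\infty}$: if $\alpha$ is integral over $R^{\infty}$, then $\alpha$ and the coefficients of a monic equation for it all lie in some $R^{1/p^{f}}$, and raising that equation to the $p^{f}$-th power shows $\alpha^{p^{f}}$ is integral over $R$, hence $\alpha^{p^{f}} \in \overline{R}$ and $\alpha \in \overline{R}^{1/p^{f}} \subseteq \overline{R}^{\infty}$; the reverse inclusion is clear. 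Consequently $R^{\infty}$ is normal if and only if $R^{\infty} = \overline{R}^{\infty}$, and because $R^{\infty}$ is perfect this is equivalent to $\overline{R} \subseteq R^{\infty}$, that is, to $R \to \overline{R}$ being purely inseparable. Thus both conclusions follow once I prove $\overline{R} \subseteq R^{\infty}$.

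I would treat the case where $R$ is a domain in detail, the general reduced case being handled similarly (see the last paragraph). Arguing by contradiction, suppose some $x \in \overline{R}$ satisfies $x^{p^{n}} \notin R$ for all $n$. Since the purely inseparable closure $R' = \{\,y \in \overline{R} : y^{p^{n}} \in R \text{ for some } n\,\}$ commutes with localization and satisfies $R' = \bigcap_{\fp} R'_{\fp}$ inside $K$, there is a prime $\fp$ with $x \notin (R_{\fp})^{\infty}$. Replacing $R$ by $R_{\fp}$ — which is again $F$-coherent by Proposition~\ref{coherent-1}(2), still has module-finite normalization, and whose perfect closure is local — I may assume $(R,\fm)$ is a local domain with $x \in \overline{R} \setminus R^{\infty}$. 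Let $\fc = (R :_{R} \overline{R})$ be the conductor; it is nonzero because $\overline{R}$ is a finite $R$-module, and $\fc \subseteq \fm$ because $R$ is not normal. Fix $0 \neq c \in \fc$. From $c\overline{R} \subseteq R$ I obtain $c^{1/p^{n}}\overline{R} \subseteq R^{1/p^{n}} \subseteq R^{\infty}$ for every $n$, so $c^{1/p^{n}} x \in R^{\infty}$ and hence $c^{1/p^{n}} \in (R^{\infty} :_{R^{\infty}} x)$ for all $n$.

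Coherence now enters through the colon ideal $\fa := (R^{\infty} :_{R^{\infty}} x)$. Writing $x = u/w$ with $u,w \in R$ and $w \neq 0$, one has $\fa = (wR^{\infty} :_{R^{\infty}} u)$, and multiplication by $u$ identifies $\fa$ with the intersection $uR^{\infty} \cap wR^{\infty}$ of two finitely generated ideals; by the coherence of $R^{\infty}$ this intersection is finitely generated, so $\fa = (a_{1},\dots,a_{k})R^{\infty}$. Next I invoke the Discussion: take a discrete valuation ring $(V,tV)$ attached to $(\overline{R},\fm)$, so that $R \subseteq V \subseteq K$ with center $\fm$, and extend its valuation $v$ to $R^{\infty} \subseteq \overline{R}^{\infty}$ with $v \geq 0$ on $R^{\infty}$ and $v(a^{1/p^{n}}) = v(a)/p^{n}$. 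Since $x \notin R^{\infty}$ we have $\fa \neq R^{\infty}$, so each $a_{i}$ is a nonunit of the local ring $R^{\infty}$; as $v$ is centered at $\fm$ this forces $v(a_{i}) > 0$, whence $\delta := \min_{i} v(a_{i}) > 0$ and every element of $\fa$ has value at least $\delta$. But $c^{1/p^{n}} \in \fa$ has $v(c^{1/p^{n}}) = v(c)/p^{n} \to 0$, so $v(c^{1/p^{n}}) < \delta$ for large $n$, a contradiction. This proves $\overline{R} \subseteq R^{\infty}$, and the theorem follows.

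The step I expect to be most delicate is the reduction to the local situation together with the guarantee that all generators $a_{i}$ have strictly positive value: passing to $R_{\fp}$ (so that $R^{\infty}$ is local and its nonunits are precisely the positive-value elements) is exactly what converts the hypothesis $x \notin R^{\infty}$ into the usable statement that every generator of $\fa$ lies in $\fm R^{\infty}$, which is where the contradiction with the shrinking values $v(c)/p^{n}$ is manufactured. When $R$ is reduced but not a domain one must run the argument through a minimal prime $\fp_{i}$: if the image of $x$ in some component $\overline{R/\fp_{i}}$ escapes $(R/\fp_{i})^{\infty}$ the domain argument applies verbatim to that component (with $v$ the induced semivaluation), whereas if every component image is purely inseparable the obstruction is of ``idempotent type'' and the same colon ideal $\fa$ is forced to contain the deep roots of a parameter, again contradicting finite generation through $v$. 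Checking that $\fc$ meets the center of the chosen valuation, and that $\fa$ is genuinely finitely generated rather than merely countably generated, are the remaining technical points that require care.
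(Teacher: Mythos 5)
Your domain case is correct and, at bottom, runs on the same engine as the paper's proof: coherence of $R^{\infty}$ forces a colon/annihilator ideal to be finitely generated, that ideal contains $c^{1/q}$ for all $q$ for a conductor element $c$, and a discrete valuation extended to the integral extension $R^{\infty}$ (with $v(c^{1/q})=v(c)/q\to 0$ against the positive minimum of the values of the finitely many generators) gives the contradiction. The packaging differs: the paper proves $R^{\infty}=(\overline{R})^{\infty}$ by showing every module-finite intermediate extension $R^{\infty}\subseteq T\subseteq(\overline{R})^{\infty}$ collapses, via the annihilator of $T/R^{\infty}$, whereas you work element-by-element with $\fa=(R^{\infty}:_{R^{\infty}}x)$ for a single $x\in\overline{R}$, after localizing so that $R^{\infty}$ is local and all generators of $\fa$ are nonunits. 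Your version is slightly more direct (no auxiliary $T$, and the identification of $\fa$ with $uR^{\infty}\cap wR^{\infty}$ is a clean way to invoke coherence), and your preliminary observation that the integral closure of $R^{\infty}$ equals $(\overline{R})^{\infty}$, so that everything reduces to $\overline{R}\subseteq R^{\infty}$, is a correct and useful clarification that the paper leaves implicit.

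The genuine gap is the reduced non-domain case, which the theorem covers and which your last paragraph does not actually handle. Your first alternative --- ``the domain argument applies verbatim to the component $R/\fp_{i}$'' --- fails as stated: the argument needs coherence of the perfect closure of the ring you are working in, and $F$-coherence of $R$ is not known to pass to $R/\fp_{i}$ for a minimal prime $\fp_{i}$. Your second alternative (``idempotent type'' obstructions) is not an argument. The correct repair, which is exactly what the paper does, is to keep the finite-generation step upstairs and only descend for the valuation: $\fa=(R^{\infty}:_{R^{\infty}}x)$ is finitely generated and proper in the coherent ring $R^{\infty}$ and contains $c^{1/q}$ for all $q$, where $c$ is a \emph{non-zero-divisor} in the conductor; choose a minimal prime $P$ of $R^{\infty}$ contained in a maximal ideal containing $\fa$ (so the image of $\fa$ in the domain $R^{\infty}/P$ is a finitely generated proper ideal), note $c\notin P$ because $c$ is a non-zero-divisor, and run the valuation argument on $R^{\infty}/P$, which is integral over the Noetherian domain $R/(R\cap P)$. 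With that substitution your proof goes through; without it, the non-domain case is unproved. (A second, minor, point to make explicit in the reduced case: the colon ideal should be computed using the coherence characterization $(J:a)$ finitely generated for finitely generated $J$, since your multiplication-by-$u$ identification uses that $u$ is a non-zero-divisor.)
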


\begin{proof}
Let $R \to \overline{R}$ be the normalization map. Since this map is module-finite, there exists a non-zero divisor $r \in R$ such that $r \cdot \overline{R} \subseteq R$. By iterating the Frobenius map, we get the commutative diagram
$$
\begin{CD}
R @>>> R^{1/p} @>>>  R^{1/p^2} @>>> \cdots \\
@AAA @AAA @AAA \\
r \overline{R} @>>> r^{1/p}(\overline{R})^{1/p} @>>>  r^{1/p^2}(\overline{R})^{1/p^2} @>>> \cdots \\
\end{CD}
$$
and taking the direct limit, we have $r^{1/q}(\overline{R})^{\infty} \subseteq R^{\infty}$ for any $q=p^{e}$. What we need to show is $(\overline{R})^{\infty}=R^{\infty}$. Let $R^{\infty} \subseteq T \subseteq (\overline{R})^{\infty}$ be any module-finite extension. Then 
$$
\begin{CD}
0 @>>> R^{\infty} @>>> T @>>> T/R^{\infty} @>>> 0 \\
\end{CD}
$$
is a short exact sequence of coherent $R^{\infty}$-modules (\cite{Glaz}, Theorem 2.2.1). Next, let $N:=T/R^{\infty}=R^{\infty} \cdot u_{1}+\cdots+R^{\infty} \cdot u_{k}$ and define the $R^{\infty}$-module map $\phi:R^{\infty} \to N^{\oplus k}$ by $\phi(a)=(au_{1},\ldots,au_{k})$. Then we see that $\Ker \phi=(0:_{R^{\infty}}N)$ and $r^{1/q} \in \Ker \phi$ for all $q=p^{e}$. But since $\phi$ is a map of coherent $R^{\infty}$-modules, $\Ker \phi$ is finitely generated in $R^{\infty}$ by (\cite{Glaz}, Corollary 2.2.2).

Assume that $\Ker \phi  \ne R^{\infty}$. Then choosing a minimal prime $P$ of $R^{\infty}$ that is contained in a maximal ideal containing $\Ker \phi$, we consider the natural map $\Psi:R^{\infty} \to R^{\infty}/P$. Then $J=\Psi(\Ker \phi)$ is a finitely generated proper ideal and for $r \in R$ as above, we see that $s=\Psi(r)$ is non-zero because $r \notin P$. $A'=R/(R \cap P) \to A=R^{\infty}/P$ is an integral extension and $A'$ is Noetherian. Extend $A'$ to its module-finite extension $B$ such that $B \subseteq A$ and the generators of $J$ are contained in $B$. Let $(V,tV)$ be a discrete valuation ring attached to $(B,Q)$ for $Q \in \Spec B$, $J \subseteq Q$. Let $v$ be the extended valuation to $A$. But then we get $v(s^{1/q})=\frac{1}{q} \cdot v(s) \to 0$ $(q \to \infty)$ and $\inf\{v(a)~|~a \in J\} \ge v(t)=1$, which are not compatible with each other, as we know $r^{1/q} \cdot R^{\infty} \subseteq \Ker \phi$. Hence $R^{\infty}=T$, which proves that $R^{\infty}=(\overline{R})^{\infty}$.
\end{proof}

\begin{Corollary}
\label{coherent-5}
Let $R$ be a one-dimensional reduced excellent ring of characteristic $p>0$. Then $R$ is $F$-coherent if and only if the normalization of $R$ is purely inseparable over $R$.
\end{Corollary}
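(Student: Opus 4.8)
The plan is to deduce this from Theorem~\ref{coherent-4}, Corollary~\ref{coherent-3}, and Proposition~\ref{coherent-1}(1), so the corollary should require essentially no new computation. The first observation I would record is that, since $R$ is excellent and reduced, its normalization $R \to \overline{R}$ is module-finite and $\overline{R}$ is again Noetherian; this is exactly the hypothesis needed to invoke the earlier results. Because $\overline{R}$ is integral over $R$, it is moreover one-dimensional and reduced.

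For the forward implication, I would assume $R$ is $F$-coherent. Since the normalization is module-finite, Theorem~\ref{coherent-4} applies directly and yields that $R \to \overline{R}$ is purely inseparable. This is precisely the desired conclusion, so no further argument is needed on this side.

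For the converse, assume that $R \to \overline{R}$ is purely inseparable. The key structural point I would establish is that $\overline{R}$ is regular. Writing the minimal primes of $R$ as $\fp_{1},\dots,\fp_{n}$, the reducedness and excellence of $R$ give a decomposition of $\overline{R}$ as the finite product of the normalizations of the domains $R/\fp_{i}$, each of which is a one-dimensional normal Noetherian domain, hence a Dedekind domain and in particular regular; a finite product of regular rings is regular. By Proposition~\ref{coherent-1}(1) a regular ring is $F$-coherent, so $\overline{R}$ is $F$-coherent. Finally, since $R \to \overline{R}$ is a purely inseparable extension of Noetherian rings, Corollary~\ref{coherent-3} gives that $R$ is $F$-coherent, completing the equivalence.

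There is no genuine obstacle here; the content of the statement is absorbed entirely into Theorem~\ref{coherent-4} and the standard fact that a one-dimensional normal Noetherian ring is regular. The only points requiring a moment of care are confirming that excellence delivers a module-finite (hence Noetherian) normalization, so that both Theorem~\ref{coherent-4} and Corollary~\ref{coherent-3} are applicable, and correctly reducing $\overline{R}$ to a product of normal domains before citing the dimension-one regularity fact.
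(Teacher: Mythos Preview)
Your proof is correct and follows exactly the approach the paper intends: the corollary is stated without proof as an immediate consequence of Theorem~\ref{coherent-4} (for the forward direction) together with the regularity of a one-dimensional normal Noetherian ring, Proposition~\ref{coherent-1}(1), and Corollary~\ref{coherent-3} (for the converse). The care you take in verifying that excellence supplies a module-finite, Noetherian normalization is precisely what makes the cited results applicable.
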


As we will see later, this result gives a geometric interpretation of one-dimensional singularities of ``$F$-coherent type'' in characteristic zero. Roughly speaking, if the normalization separates a single singular point of a variety into at least two smooth points, then it is not of $F$-coherent type.

\begin{Example}
Let $R=k[x^{4},x^{2}y,xy^{2},y^{4}]$ for a field $k$ of characteristic $p > 2$. Then we see that $(xy^{2})^{2}/y^{4}=x^{2}$ is in the normalization of $R$. Now assume that $R$ is $F$-coherent. In view of Theorem~\ref{coherent-4}, $x^{2}$ is purely inseparable over $R$. But then $(x^{4})^{n}=(x^{2})^{p^k}$ for some $k > 0$ and thus $2n=p^k$, which cannot be the case since $p > 2$. Therefore, $R$ is not $F$-coherent. However, if $k$ has characteristic 2, then $R$ is $F$-coherent. Finally, the normalization of $R$ is just $k[x,y]$ for any field $k$, so it is $F$-coherent. 
\end{Example}

\begin{Theorem}
\label{coherent-6}
Let $(R,\fm)$ be a reduced local ring which is a residue class ring of a Gorenstein local ring. Then if $R$ is an $F$-coherent ring, every system of parameters of $R$ is a regular sequence on $R^{\infty}$.
\end{Theorem}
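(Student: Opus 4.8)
The plan is to prove by induction on $i$ that $x_i$ is a non-zerodivisor on $R^\infty/(x_1,\dots,x_{i-1})R^\infty$. Writing $J_i=(x_1,\dots,x_{i-1})R^\infty$, this amounts to showing that the colon module $C_i=(J_i:_{R^\infty}x_i)/J_i$ vanishes; note that $(x_1,\dots,x_d)R^\infty\subseteq\fm R^\infty$ is proper, because $R^\infty/\fm R^\infty$ is the perfect closure of the field $k$ and hence nonzero, so a genuine regular sequence is what results. The first point is that $C_i$ is finitely generated. Since $R^\infty$ is coherent and $J_i$ is finitely generated, $R^\infty/J_i$ is finitely presented and therefore coherent; the kernel of multiplication by $x_i$ on this coherent module is again coherent, hence finitely generated (\cite{Glaz}, Theorem 2.2.1 and Corollary 2.2.2). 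Thus $C_i$ is generated by finitely many classes $[z_1],\dots,[z_m]$.

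The second point is that every element of $C_i$ is killed by elements of arbitrarily small positive valuation. Here I would use that $R$, being a reduced residue class ring of a Gorenstein (hence Cohen-Macaulay) local ring, satisfies the colon-capturing property of tight closure: for any system of parameters $y_1,\dots,y_d$ one has $(y_1,\dots,y_{i-1}):_R y_i\subseteq(y_1,\dots,y_{i-1})^{*}$. Given a class $[z]\in C_i$, the relation $zx_i\in J_i$ already holds in some $R^{1/q_0}$; applying the ring isomorphism $R^{1/q_0}\xrightarrow{\sim}R$, $u\mapsto u^{q_0}$, converts it into $w\,x_i^{q_0}\in(x_1^{q_0},\dots,x_{i-1}^{q_0})R$ with $w=z^{q_0}$. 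Colon capturing for the system of parameters $x_1^{q_0},\dots,x_d^{q_0}$ then yields $c\in R^{0}$ with $c\,w^{q}\in(x_1^{q_0q},\dots,x_{i-1}^{q_0q})R$ for all $q\gg0$, and extracting $(q_0q)$-th roots gives $c^{1/(q_0q)}z\in J_i$, that is $c^{1/(q_0q)}[z]=0$ in $C_i$.

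To conclude I would reproduce the valuation argument of Theorem~\ref{coherent-4}. Let $c_1,\dots,c_m\in R^{0}$ be the test elements attached to the generators $[z_1],\dots,[z_m]$ and put $c=c_1\cdots c_m$, a non-zerodivisor since $R$ is reduced; then $c^{1/N}\in\Ann_{R^\infty}(C_i)$ for all large $N$. The ideal $\Ann_{R^\infty}(C_i)=\Ker\bigl(R^\infty\to C_i^{\oplus m}\bigr)$ is finitely generated by coherence, exactly as $\Ker\phi$ was in Theorem~\ref{coherent-4}. If it were proper, choose a minimal prime $P$ of $R^\infty$ inside a maximal ideal containing it; as $c$ avoids every minimal prime of $R$ and $\Spec R^\infty\to\Spec R$ is a homeomorphism, the image $s$ of $c$ in the domain $A=R^\infty/P$ is nonzero. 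Attaching a discrete valuation ring $(V,tV)$ to $(B,Q)$ for a module-finite $R/(R\cap P)\subseteq B\subseteq A$ containing the generators of the image of $\Ann_{R^\infty}(C_i)$, and extending the valuation $v$ to $A$, one gets $v(s^{1/N})=\tfrac1N v(s)\to0$ while every element of that ideal has valuation at least $v(t)=1$, a contradiction. Hence $\Ann_{R^\infty}(C_i)=R^\infty$, so $C_i=0$, and the induction is complete.

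The step I expect to be the main obstacle is the descent in the second paragraph: one must push a colon relation living in the non-Noetherian ring $R^\infty$ down to a bona fide tight-closure membership in the Noetherian ring $R$ for a power of the given system of parameters, and one must arrange that the resulting test elements lie in $R^{0}$, so that they avoid the minimal primes along which the valuation is constructed. Once $\Ann_{R^\infty}(C_i)$ is known to be finitely generated and to contain elements $c^{1/N}$ of vanishing valuation, the final contradiction is precisely the one already established in Theorem~\ref{coherent-4}.
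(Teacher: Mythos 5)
Your proof is correct and follows essentially the same route as the paper's: coherence of $R^{\infty}$ makes the relevant colon module finitely generated, colon capturing in $R$ (transferred via $p^{e}$-th roots) produces elements $c^{1/q}$ of arbitrarily small order annihilating it, and the valuation argument of Theorem~\ref{coherent-4}, applied after passing to $R^{\infty}/P$ for a minimal prime $P$ avoiding $c$, forces the annihilator to be the unit ideal. The only cosmetic difference is that the paper works with a nonzero cyclic submodule of the kernel and invokes a single uniform multiplier $c$ from (\cite{BH}, Corollary 8.1.4), whereas you treat the whole kernel and assemble $c$ as a product of test elements attached to its finitely many generators.
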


\begin{proof}
The claim is that $R^{\infty}$ is a big Cohen-Macaulay $R$-algebra. First of all, we need to show that every system of parameters of $R$ forms an almost regular sequence on $R^{\infty}$ (\cite{RSS}, Proposition 1.4 for the case when $R$ is a domain). Let $d=\dim R$ and $x_{1},\ldots,x_{d}$ be a system of parameters. Then by (\cite{BH}, Corollary 8.1.4), there is a non-nilpotent element $c \in R$ for which 
$$
c \cdot \big((x_{1}^{p^e},\ldots,x_{i}^{p^e}):_{R} x_{i+1}^{p^e}\big) \subseteq (x_{1}^{p^e},\ldots,x_{i}^{p^e})
$$
for $0 \le i \le d-1$ and any integer $e > 0$. Since $R^{1/p^e} \simeq R$ under the $e$-th Frobenius map, we have $c^{1/p^e} \cdot \big((x_{1},\ldots,x_{i}):_{R^{1/p^e}} x_{i+1}\big) \subseteq (x_{1},\ldots,x_{i})R^{1/p^e}$. Since $e > 0$ is arbitrary, we have
$$
c^{1/q} \cdot \big((x_{1},\ldots,x_{i}):_{R^{\infty}} x_{i+1}\big) \subseteq (x_{1},\ldots,x_{i})R^{\infty}
$$
for all $q=p^e$. For a contradiction, consider the multiplication map:
$$
\begin{CD}
R^{\infty}/(x_{1},\ldots,x_{i})R^{\infty} @>x_{i+1}>> R^{\infty}/(x_{1},\ldots,x_{i})R^{\infty}
\end{CD}
$$
and assume that for some $i>0$, the kernel of the above map contains a non-zero cyclic $R^{\infty}$-submodule $N$. Since $R^{\infty}/(x_{1},\ldots,x_{i})R^{\infty}$ is finitely presented over $R^{\infty}$, it is coherent by (\cite{Glaz}, Theorem 2.3.2), and thus we find that $N$ is a finitely presented cyclic $R^{\infty}$-module. So we may write it as $R^{\infty}/J$ for some finitely generated ideal $J$. Note that $J$ is not a unit ideal.

Knowing that $c^{1/q} \cdot R^{\infty} \subseteq J$, $R^{\infty}$ is local, and $c$ is not nilpotent, we can find a minimal prime $P$ of $R^{\infty}$ such that $c \notin P$ and $(J+P)$ is a proper ideal. After reducing $R^{\infty}$ modulo $P$, we may argue as previously to get a contradiction by finding some optimal valuation on $R^{\infty}/P$. Hence $x_{i+1}$ must be a non-zero divisor of $R^{\infty}/(x_{1},\ldots,x_{i})R^{\infty}$. As our initial choice of a system of parameters was arbitrary, we proved the theorem.
\end{proof}

For example, if $R$ is a complete local domain and $F$-pure, but not Cohen-Macaulay, then in view of Theorem~\ref{coherent-6}, a system of parameters of $R$ is not a regular sequence on $R^{\infty}$. Hence $R$ is not $F$-coherent. Rings of this type abound. Example~\ref{coherent-2} shows that even if $R^{\infty}$ is Cohen-Macaulay, $R$ is not so in general. We will explore this issue later again. The following proposition is taken from (\cite{AH}, Proposition 2.3) with a slight modification.

\begin{Proposition}[Criterion for $F$-purity and $F$-regularity]
\label{coherent-7}
Let $R$ be a reduced Noetherian ring of characteristic $p>0$. Suppose that $R$ is $F$-coherent and $I \subseteq R$ is any ideal. Then we have $I^{F}=I^{*}$. In particular, if $R$ is $F$-coherent, $F$-purity is equivalent to $F$-regularity.
\end{Proposition}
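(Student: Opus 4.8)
The inclusion $I^{F}\subseteq I^{*}$ is already recorded in the preliminaries, so the entire content of the proposition is the reverse inclusion $I^{*}\subseteq I^{F}$. Since $I^{F}=IR^{\infty}\cap R$ and $I^{*}\subseteq R$, it suffices to prove that every $x\in I^{*}$ already lies in the extended ideal $IR^{\infty}$. My plan is to convert the defining tight closure relation into a genuine membership statement inside $R^{\infty}$ by extracting $q$-th roots, and then to absorb the resulting ``error term'' $c^{1/q}$ by the same valuative mechanism used in the proofs of Theorem~\ref{coherent-4} and Theorem~\ref{coherent-6}.

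First I would fix $x\in I^{*}$ and choose $c\in R^{0}$ with $cx^{q}\in I^{[q]}$ for all $q=p^{e}\gg 0$. Writing $I=(f_{1},\ldots,f_{n})$, which is possible because $R$ is Noetherian, I would exploit the fact that on the perfect ring $R^{\infty}$ the inverse Frobenius $a\mapsto a^{1/q}$ is a ring automorphism: applying it to a relation $cx^{q}=\sum_{i}a_{i}f_{i}^{q}$ yields $c^{1/q}x=\sum_{i}a_{i}^{1/q}f_{i}\in IR^{\infty}$. Hence $c^{1/q}\in (IR^{\infty}:_{R^{\infty}}x)=:J$ for all $q\gg 0$, and the goal $x\in IR^{\infty}$ is precisely the statement $J=R^{\infty}$.

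The hypothesis of $F$-coherence enters exactly to force $J$ to be finitely generated. Indeed $IR^{\infty}$ is a finitely generated ideal of the coherent ring $R^{\infty}$, so $R^{\infty}/IR^{\infty}$ is finitely presented and therefore coherent by (\cite{Glaz}, Theorem 2.3.2); the cyclic submodule generated by the image $\overline{x}$ is then finitely presented, which forces its annihilator $J=\Ann_{R^{\infty}}(\overline{x})$ to be finitely generated. Now I would argue by contradiction. Assuming $J\neq R^{\infty}$, I would choose a minimal prime $P$ of $R^{\infty}$ with $c\notin P$ and $J+P$ proper (possible since $c$ is non-nilpotent), pass to $R^{\infty}/P$, and attach a discrete valuation $(V,tV)$ exactly as in Theorem~\ref{coherent-4}, using a module-finite Noetherian subalgebra large enough to contain the finitely many generators of the image of $J$. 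Writing $s$ for the image of $c$ and extending the valuation $v$ to the residue domain, the relations $c^{1/q}\in J$ give $v(s^{1/q})\geq v(t)=1$ for all large $q$, whereas $v(s^{1/q})=\tfrac{1}{q}v(s)\to 0$; this incompatibility forces $J=R^{\infty}$, so $x\in IR^{\infty}\cap R=I^{F}$.

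The step I expect to be the main obstacle is this final valuative contradiction, and it is here that finite generation of $J$ is indispensable: without coherence one could not descend to a module-finite subalgebra capturing all of $J$ and bound the valuations of its elements uniformly from below. Once $I^{F}=I^{*}$ is established for every ideal, the last assertion is essentially formal. Weak $F$-regularity ($I=I^{*}$ for all $I$) and $F$-purity ($I=I^{F}$ for all $I$) coincide immediately; moreover the isomorphism $(S^{-1}R)^{\infty}\simeq S^{-1}(R^{\infty})$ from Proposition~\ref{coherent-1}(2) shows that Frobenius closure commutes with localization, so $F$-purity is stable under localization, and the equivalence therefore propagates to every localization, yielding $F$-regularity.
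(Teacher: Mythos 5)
Your proposal is correct and follows essentially the same route as the paper: take $q$-th roots of the tight closure relation to get $c^{1/q}\in J=(IR^{\infty}:_{R^{\infty}}x)$, use coherence to make $J$ finitely generated, and kill the case $J\neq R^{\infty}$ by the same valuative contradiction already used in Theorem~\ref{coherent-4}. The only difference is that you spell out details the paper leaves implicit (the descent to a module-finite Noetherian subalgebra, and the fact that Frobenius closure commutes with localization for the final equivalence), which is a faithful filling-in rather than a new argument.
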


\begin{proof}
We first recall that $I^{F} \subseteq I^{*}$. For a contradiction, let $u \in R$ be such that $u \in I^{*}$, but $u \notin I^{F}$. Then coherence implies that $J:=IR^{\infty}:_{R^{\infty}} u$ is a finitely generated non-unit ideal, and we have $c u^{q} \in I^{[q]}$ for $q=p^{e} \gg 0$ and $c \in R^{0}$ by our assumption. Hence $c^{1/q}u \in IR^{1/q} \subseteq IR^{\infty}$, or equivalently $c^{1/q} \cdot R^{\infty} \subseteq J$ for $q=p^{e} \gg 0$. Since $c \in R^{0}$, we can find a minimal prime $P$ of $R^{\infty}$ such that $c \notin P$ and $(J+P)$ is a proper ideal. Then we may argue as previously to conclude that $I^{F}=I^{*}$.

Suppose next that $R$ is $F$-coherent and $F$-pure. Then the above discussion gives that $I=I^{F}=I^{*}$ for any ideal $I$, which implies that every ideal of $R$ is tightly closed. In order to show that every ideal in every localization of $R$ is tightly closed, it suffices to recall that $F$-coherence is stable under localization, as shown previously. This completes the proof of the proposition.
\end{proof} 

As a consequence from the above proposition, tight closure commutes with localization in any $F$-coherent ring. Namely, we have the following.

\begin{Corollary}
Let $R$ be any reduced $F$-coherent ring of characteristic $p>0$ and let $I \subseteq R$ be any ideal. Then $S^{-1}(I^{*})=(S^{-1}I)^{*}$ for any multiplicative subset $S \subseteq R$.
\end{Corollary}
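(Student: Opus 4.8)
The plan is to derive this localization statement directly from Proposition~\ref{coherent-7}, which tells us that in an $F$-coherent ring every ideal satisfies $I^{F}=I^{*}$. The key structural fact I would exploit is that Frobenius closure, unlike tight closure, manifestly commutes with localization: since $I^{F}=IR^{\infty}\cap R$ and, by part (2) of Proposition~\ref{coherent-1} together with the isomorphism $(S^{-1}R)^{\infty}\simeq S^{-1}(R^{\infty})$ established in its proof, perfect closure commutes with localization, the Frobenius closure should pass through $S^{-1}(-)$ without friction.

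First I would fix a multiplicative set $S\subseteq R$ and record that $S^{-1}R$ is again reduced and $F$-coherent (the latter by Proposition~\ref{coherent-1}(2)), so that Proposition~\ref{coherent-7} applies equally to $S^{-1}R$ and to its ideal $S^{-1}I$, giving $(S^{-1}I)^{F}=(S^{-1}I)^{*}$. Next I would verify the localization formula for Frobenius closure, namely
\[
S^{-1}(I^{F})=(S^{-1}I)^{F}.
\]
Using the alternative characterization $x\in I^{F}$ iff $x^{q}\in I^{[q]}$ for all $q=p^{e}\gg 0$, this reduces to the routine observation that for $x/s\in S^{-1}R$ one has $(x/s)^{q}\in (S^{-1}I)^{[q]}=S^{-1}(I^{[q]})$ for large $q$ precisely when there is a single $t\in S$ with $tx^{q}\in I^{[q]}$ for large $q$; clearing denominators and absorbing the finitely many $q$-dependent adjustments into $S$ makes both inclusions transparent. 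Alternatively, one invokes $I^{F}=IR^{\infty}\cap R$ and the flatness of localization over $R$ to commute $S^{-1}(-)$ past the intersection and the extension to $R^{\infty}$.

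With these two ingredients in hand, the conclusion is a short chain of equalities: using Proposition~\ref{coherent-7} for $R$, then the localization formula for Frobenius closure, then Proposition~\ref{coherent-7} for $S^{-1}R$, I would write
\[
S^{-1}(I^{*})=S^{-1}(I^{F})=(S^{-1}I)^{F}=(S^{-1}I)^{*},
\]
which is exactly the asserted identity. The one inclusion that always holds for tight closure regardless of $F$-coherence, namely $S^{-1}(I^{*})\subseteq (S^{-1}I)^{*}$, provides a useful sanity check on the first and last links of this chain.

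The step I expect to be the main obstacle is the proof of $S^{-1}(I^{F})=(S^{-1}I)^{F}$, specifically the containment $(S^{-1}I)^{F}\subseteq S^{-1}(I^{F})$. The subtlety is that membership in $(S^{-1}I)^{F}$ supplies, a priori, only a single denominator $t\in S$ clearing the Frobenius relations, and one must ensure that this $t$ can be chosen uniformly for all large $q$ rather than varying with $q$; this is where I would lean on the characterization $I^{F}=IR^{\infty}\cap R$ and the exactness of localization to sidestep the quantifier-over-$q$ issue entirely, since the perfect-closure description requires no such uniformity and behaves well under the flat base change $R\to S^{-1}R$.
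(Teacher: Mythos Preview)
Your proposal is correct and follows exactly the route the paper has in mind: the corollary is stated without proof, immediately after Proposition~\ref{coherent-7}, as a direct consequence of $I^{*}=I^{F}$ together with the stability of $F$-coherence under localization (Proposition~\ref{coherent-1}(2)) and the elementary fact that Frobenius closure commutes with localization. Your preferred verification of $S^{-1}(I^{F})=(S^{-1}I)^{F}$ via $I^{F}=IR^{\infty}\cap R$ and exactness of localization is the cleanest way to handle it, and your worry about the quantifier-over-$q$ issue in the elementwise description is in fact harmless (once $x^{q_{0}}\in I^{[q_{0}]}$ holds for one $q_{0}$ it holds for all larger $q$), but the perfect-closure argument avoids even having to think about it.
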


\begin{Remark}
By the main results of~\cite{BLR}, a positive affine semigroup ring $k[C]$ is $F$-pure if and only if it is normal (see also~\cite{HR}, Theorem 5.33). In view of Proposition~\ref{coherent-7} and the example $k[x^{4},x^{2}y,xy^{2},y^{4}]$ treated above, it is interesting to know whether the normality is equivalent to $F$-coherence for affine semigroup rings. If this is true, it would provide many examples of $F$-coherent rings which are not associated with regular rings, $i.e.,$ they are not purely inseparably related.
\end{Remark}

We state some consequences on the Cohen-Macaulay property for $F$-coherent rings under various conditions.

\begin{Corollary}[Criterion for Cohen-Macaulayness I]
Suppose that $R$ is a reduced $F$-coherent ring of characteristic $p>0$ and satisfies one of the following conditions:

\begin{enumerate}
\item[$\mathrm{(1)}$]
$R$ is a residue class ring of a Cohen-Macaulay ring.

\item[$\mathrm{(2)}$]
$R$ is excellent.
\end{enumerate}

Then if $R$ is $F$-pure, it is Cohen-Macaulay.
\end{Corollary}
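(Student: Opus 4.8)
The plan is to reduce to the local case and then combine the big Cohen--Macaulay property of $R^\infty$ supplied by Theorem~\ref{coherent-6} with the cyclic purity supplied by $F$-purity. Since Cohen--Macaulayness is tested at the maximal ideals, I fix a maximal ideal $\fm$ and set $A=R_\fm$, a local ring whose maximal ideal I again denote by $\fm$; it suffices to show $A$ is Cohen--Macaulay. First I would check that every hypothesis localizes. The ring $A$ is reduced and, by Proposition~\ref{coherent-1}(2), it is $F$-coherent with $A^\infty\cong (R^\infty)_\fm$. Moreover $R$ is $F$-coherent and $F$-pure, hence $F$-regular by Proposition~\ref{coherent-7}, so $A$ is weakly $F$-regular; applying Proposition~\ref{coherent-7} to $A$ gives $I^F=I^*=I$ for every ideal $I\subseteq A$ (the equality $I^*=I$ being weak $F$-regularity and $I^F=I^*$ being the proposition), so $A$ is $F$-pure. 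Unwinding the definition $I^F=IA^\infty\cap A$, this says exactly that $IA^\infty\cap A=I$ for every ideal $I$ of $A$, i.e. the inclusion $A\hookrightarrow A^\infty$ is cyclically pure.

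Granting that Theorem~\ref{coherent-6} applies to $A$, every system of parameters $x_1,\ldots,x_d$ of $A$ is a regular sequence on $A^\infty$, and I would descend this to $A$ by the standard cyclic-purity argument, which is the conceptual heart of the proof. Fix $i$ with $0\le i\le d-1$ and suppose $x_{i+1}a\in(x_1,\ldots,x_i)A$ for some $a\in A$. Then $x_{i+1}a\in(x_1,\ldots,x_i)A^\infty$, and since $x_{i+1}$ is a non-zero divisor on $A^\infty/(x_1,\ldots,x_i)A^\infty$ we obtain $a\in(x_1,\ldots,x_i)A^\infty$. Cyclic purity then yields
$$
a\in(x_1,\ldots,x_i)A^\infty\cap A=(x_1,\ldots,x_i)A,
$$
so $x_{i+1}$ is a non-zero divisor on $A/(x_1,\ldots,x_i)A$; the case $i=0$ uses that $A\hookrightarrow A^\infty$ because $A$ is reduced, and $(x_1,\ldots,x_d)A\ne A$ because the $x_j$ lie in $\fm$. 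Hence $x_1,\ldots,x_d$ is a regular sequence on $A$, so $\depth A=\dim A$ and $A$ is Cohen--Macaulay.

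What remains, and where I expect the real difficulty, is to verify that Theorem~\ref{coherent-6} is genuinely available for $A$ under either hypothesis; the rest is formal. Inspecting that theorem's proof, the only input beyond reducedness and $F$-coherence is the colon-capturing multiplier of (\cite{BH}, Corollary 8.1.4): a non-nilpotent $c\in A$ with $c\cdot\big((x_1^{q},\ldots,x_i^{q}):_A x_{i+1}^{q}\big)\subseteq(x_1^{q},\ldots,x_i^{q})$ for all $q=p^e$ and all $i$. Under condition (1), $A$ is a residue class ring of a Cohen--Macaulay local ring, which is precisely the setting producing such a $c$, so the proof of Theorem~\ref{coherent-6} carries over unchanged. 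Under condition (2), $A$ is excellent; here I would invoke that an excellent local ring admits a dualizing complex and is therefore a residue class ring of a Gorenstein local ring, bringing $A$ into the exact hypothesis of Theorem~\ref{coherent-6} (alternatively, one may quote directly the existence of such a colon-capturing test element for excellent reduced rings of characteristic $p$). The delicate point is thus matching conditions (1) and (2) to this colon-capturing hypothesis---in particular justifying the multiplier at the generality of a residue class ring of a Cohen--Macaulay rather than a Gorenstein ring in case (1); once this is secured, the localization and descent steps above finish the proof.
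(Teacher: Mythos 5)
Your route is genuinely different from the paper's, and the paper's is much shorter: by Proposition~\ref{coherent-7}, $F$-pure plus $F$-coherent gives $F$-regular, and the corollary then follows by simply citing (\cite{HH1}, Theorem 4.2 and Proposition 6.27), which assert that a weakly $F$-regular ring satisfying (1) or (2) is Cohen--Macaulay. You actually derive weak $F$-regularity of $A=R_\fm$ along the way, but use it only to recover cyclic purity of $A\to A^\infty$, and instead route Cohen--Macaulayness through Theorem~\ref{coherent-6} (every system of parameters is regular on $A^\infty$) followed by descent. The localization and the descent step themselves are fine: cyclic purity of $A\to A^\infty$ together with regularity of a system of parameters on $A^\infty$ does force it to be a regular sequence on $A$.

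The problem is exactly where you place it, and your proposed fixes do not close it. Theorem~\ref{coherent-6} is stated for residue class rings of \emph{Gorenstein} local rings, its proof resting on the colon-capturing multiplier of (\cite{BH}, Corollary 8.1.4). In case (1) you need that multiplier for quotients of Cohen--Macaulay local rings; this is plausible but is precisely the point requiring argument, and the paper avoids it by not passing through Theorem~\ref{coherent-6} at all. In case (2) the claimed reduction is false as stated: an excellent local ring need not admit a dualizing complex, hence (by Kawasaki's characterization) need not be a residue class ring of a Gorenstein local ring, so excellence alone does not bring $A$ under the hypotheses of Theorem~\ref{coherent-6}. The parenthetical alternative is also not free: colon capturing for excellent local rings requires equidimensionality, which is not among your hypotheses and is not automatic for a reduced excellent local ring. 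Since you already have weak $F$-regularity of $A$, the clean finish is the paper's: quote (\cite{HH1}, Theorem 4.2) for case (1) and (\cite{HH1}, Proposition 6.27) for case (2), which handle the excellent case by entirely different means (test elements and completion) than a big Cohen--Macaulay algebra argument.
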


\begin{proof}
By the previous proposition, we see that if $R$ is $F$-pure, then it is $F$-regular, so that our assumptions imply that it is Cohen-Macaulay due to (\cite{HH1}, Theorem 4.2 and Proposition 6.27).
\end{proof}

The following corollary is very similar to the above one, but there are minor differences in their assumptions.

\begin{Corollary}[Criterion for Cohen-Macaulayness II]
Let $(R,\fm)$ be a reduced local ring which is a residue class ring of a Gorenstein local ring.  If $R$ is $F$-coherent and $F$-injective, then $R$ is Cohen-Macaulay.
\end{Corollary}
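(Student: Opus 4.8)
The plan is to reduce the Cohen--Macaulay conclusion to the single depth inequality $\depth R=\dim R$, which in turn amounts to the vanishing of the local cohomology modules $H^i_{\fm}(R)$ for all $i<d:=\dim R$. The bridge to the hypotheses is the big Cohen--Macaulay property of $R^{\infty}$: since $R$ is a reduced local ring that is a residue class ring of a Gorenstein local ring and is $F$-coherent, Theorem~\ref{coherent-6} guarantees that a system of parameters $x_{1},\ldots,x_{d}$ is a regular sequence on $R^{\infty}$, i.e. $R^{\infty}$ is a big Cohen--Macaulay $R$-algebra. I would fix such a system once and for all and compute all local cohomology with the \v{C}ech complex on $x_{1},\ldots,x_{d}$, which is legitimate because $(x_{1},\ldots,x_{d})$ and $\fm$ have the same radical in both $R$ and $R^{\infty}$.

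The argument then rests on two ingredients. First I would record that $H^i_{\fm}(R^{\infty})=0$ for every $i<d$. This is the standard consequence of $x_{1},\ldots,x_{d}$ being a regular sequence on the (non-Noetherian) ring $R^{\infty}$: the \v{C}ech cohomology of a regular sequence is concentrated in the top degree, so the modules below degree $d$ vanish. Second, I would identify $H^i_{\fm}(R^{\infty})$ with a Frobenius colimit of copies of $H^i_{\fm}(R)$. Writing $R^{\infty}=\varinjlim_{e}R^{1/p^{e}}$ and using that the \v{C}ech complex commutes with filtered colimits gives $H^i_{\fm}(R^{\infty})=\varinjlim_{e}H^i_{\fm}(R^{1/p^{e}})$; under the abstract isomorphisms $R^{1/p^{e}}\cong R$ the transition maps become exactly the Frobenius maps $H^i_{\fm}(R)\to H^i_{\fm}(R^{(1)})$ from the definition of $F$-injectivity, so $H^i_{\fm}(R^{\infty})=\varinjlim\big(H^i_{\fm}(R)\xrightarrow{F}H^i_{\fm}(R)\xrightarrow{F}\cdots\big)$.

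With both ingredients in hand, the conclusion is immediate: $F$-injectivity makes every transition map $F$ injective, hence the canonical map $H^i_{\fm}(R)\to H^i_{\fm}(R^{\infty})$ into the colimit is injective for each $i$. Combining this with the vanishing $H^i_{\fm}(R^{\infty})=0$ $(i<d)$ from the first ingredient forces $H^i_{\fm}(R)=0$ for all $i<d$, so $\depth R\ge d=\dim R$ and therefore $R$ is Cohen--Macaulay. Note that I never need to analyze the top module $H^d_{\fm}(R)$ at all; only the vanishing below the top degree enters.

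The step I expect to be the main obstacle is the careful bookkeeping behind the second identification. I must verify that the direct-limit presentation $R^{\infty}=\varinjlim_{e}R^{1/p^{e}}$ is compatible with the supports used to compute local cohomology (matching $\fm$ with its radical-equivalent maximal ideal inside each $R^{1/p^{e}}$), that local cohomology genuinely commutes with this colimit for the non-Noetherian ring $R^{\infty}$, and that the resulting transition maps coincide on the nose with the Frobenius action appearing in the hypothesis of $F$-injectivity. Once these compatibilities are pinned down through the \v{C}ech model, the homological deduction is purely formal.
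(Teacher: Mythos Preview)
Your argument is correct and matches the paper's own proof: both use Theorem~\ref{coherent-6} to obtain $H^{i}_{\fm}(R^{\infty})=0$ for $i<d$, then deduce $H^{i}_{\fm}(R)=0$ from the injectivity of $H^{i}_{\fm}(R)\to H^{i}_{\fm}(R^{\infty})$ supplied by $F$-injectivity. Your added discussion of the colimit identification and \v{C}ech-complex compatibilities is just an explicit unpacking of what the paper records in a single clause.
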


\begin{proof}
We need only show that $H^{k}_{\fm}(R)=0$ for all $k < \dim R$. As was already shown, every system of parameters of $R$ is a regular sequence on $R^{\infty}$, so $H^{k}(R^{\infty})=0$ and the natural map $H^{k}_{\fm}(R) \to H^{k}_{\fm}(R^{\infty})$ is injective for all $k < \dim R$ since $R$ is $F$-injective, which clearly gives $H^{k}_{\fm}(R)=0$ for $k < \dim R$, as desired.
\end{proof}

\begin{Definition}
Let $R \to S$ be a ring extension. Then we say that $R$ is an $\textit{algebra retract}$ if there is a ring homomorphism $\phi:S \to R$ such that the restriction of $\phi$ to $R$ is the identity map on $R$.
\end{Definition}

\begin{Theorem}
Let $R \to S$ be a ring extension of reduced Noetherian rings of characteristic $p>0$ and suppose that $R$ is an algebra retract of $S$ and $S$ is $F$-coherent. Then $R$ is $F$-coherent.
\end{Theorem}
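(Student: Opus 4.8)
The plan is to split the argument into two independent pieces: first, to show that taking perfect closures turns the algebra retract $R\to S$ into an algebra retract $R^\infty\to S^\infty$, and second, to prove the purely ring-theoretic fact that \emph{an algebra retract of a coherent ring is coherent}. Throughout I write the extension and the retraction as $R\xrightarrow{\iota}S\xrightarrow{\phi}R$ with $\phi\iota=\mathrm{id}_R$, where $\phi$ is the map of the preceding Definition.

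The first piece uses functoriality of the perfect closure. As recorded in the Remark above, for a reduced ring of characteristic $p$ one has $R^\infty=\varinjlim(R\xrightarrow{F}R\xrightarrow{F}\cdots)$, and since every ring homomorphism commutes with the Frobenius $F$, it induces a map on these colimits; the assignment $(-)^\infty$ is then functorial. Applying it to $\phi\iota=\mathrm{id}_R$ gives $\phi^\infty\iota^\infty=\mathrm{id}_{R^\infty}$, so $\iota^\infty$ is injective and I may regard $A:=R^\infty$ as a subring of the coherent ring $B:=S^\infty$, equipped with a ring retraction $\psi:=\phi^\infty\colon B\to A$ satisfying $\psi|_A=\mathrm{id}_A$. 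It then suffices to prove that $A$ is coherent.

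For the second piece I would verify the two conditions of the coherence criterion recalled at the start of this section (Glaz, Theorem 2.3.2), transporting generators down along $\psi$ in the spirit of the transfer argument in Proposition~\ref{coherent-1}(4). For $a\in A$, coherence of $B$ gives $(0:_B a)=Bb_1+\cdots+Bb_n$; since $\psi(a)=a$, each $\psi(b_i)$ lies in $(0:_A a)$, and for any $y\in(0:_A a)\subseteq(0:_B a)$ writing $y=\sum_i\beta_i b_i$ and applying $\psi$ yields $y=\psi(y)=\sum_i\psi(\beta_i)\psi(b_i)$, so $(0:_A a)$ is generated by $\psi(b_1),\ldots,\psi(b_n)$. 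For finitely generated ideals $I,J\subseteq A$, the crucial contraction identity is $IB\cap A=I$: if $x=\sum_j\beta_j g_j\in IB\cap A$ with the $g_j$ generating $I$, then $x=\psi(x)=\sum_j\psi(\beta_j)g_j\in I$. Likewise $JB\cap A=J$, so $I\cap J=(IB\cap JB)\cap A$; writing $IB\cap JB=Bc_1+\cdots+Bc_m$ by coherence of $B$ and pushing the $c_l$ down shows $\psi(c_1),\ldots,\psi(c_m)$ generate $I\cap J$ over $A$.

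The two genuinely substantive steps, which I would treat with care, are the functoriality of the perfect closure together with the preservation of the retract structure, and the contraction identity $IB\cap A=I$; everything else is a short diagram chase. I expect the main obstacle to be making the first step precise — in particular checking that $\psi$ really restricts to the identity on $A$, since every manipulation above ($\psi(a)=a$, $\psi(g_j)=g_j$) relies on having a ring retraction that fixes $A$ pointwise.
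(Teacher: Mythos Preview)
Your proposal is correct and follows essentially the same two-step strategy as the paper: first lift the retraction to perfect closures, then use that an algebra retract of a coherent ring is coherent. The only difference is that the paper dispatches the second step by citing (\cite{Glaz}, Theorem~4.1.5), whereas you spell out the argument via the $(0:a)$ and $I\cap J$ criterion; your direct verification is fine and makes the proof self-contained.
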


\begin{proof}
Let $\phi:S \to R$ be a retraction map. Then we can extend $\phi$ to a ring homomorphism $\phi_{\infty}:S^{\infty} \to R^{\infty}$ such that the restriction $\phi_{\infty} \vert_{R^{\infty}}$ is the identity map as follows. The composition of ring homomorphisms:
$$
\begin{CD}
S^{1/q} @>\simeq >> S @>\phi>> R @>\simeq>> R^{1/q}, \\
\end{CD}
$$
where the first and the third maps are given by the Frobenius maps, yields a compatible sequence of retraction maps $\phi_{e}$ with $\phi_{0}=\phi$ for every $q=p^e$ and taking its direct limit, we find that $\varinjlim_{e} \phi_{e}$ is the desired map. Now we apply (\cite{Glaz}, Theorem 4.1.5) to conclude that $R^{\infty}$ is coherent, as desired.
\end{proof}

I do not know if the above theorem holds, only assuming that $R$ is a direct summand of $S$ as an $R$-module. The situation in the above theorem is to be seen in the case of affine semigroup rings. Let $k[C]$ be an affine semigroup ring and let $F$ be a face of the cone $\mathbb{R}_{+}C$. Then $k[F \cap C]$ is an algebra retract of $k[C]$. This follows from the fact that $C \backslash F$ is a (prime) ideal. Let $P_{F}$ be an ideal generated by the monomials $X^{c}$ for $c \in C \backslash F$. Then $k[C]=k[C \cap F]+P_{F}$ and clearly $k[C \cap F] \cap P_{F}=0$. Hence $k[C \cap F]$ is an algebra retract of $k[C]$. 

The next proposition shows that we can always reduce to the henselian local rings to study the $F$-coherent property of local rings.

\begin{Proposition}
Let $(R,\fm)$ be a reduced local ring of characteristic $p>0$. Then $R$ is $F$-coherent if and only if the henselization $R^{\mathrm{h}}$ is so.
\end{Proposition}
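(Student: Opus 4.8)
The plan is to prove the two implications separately, taking one direction directly from the faithfully flat descent already established and reserving the real work for the converse.

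For the ``if'' direction I would note that the henselization map $R \to R^{\mathrm{h}}$ is a local homomorphism of local rings which is flat, being a filtered colimit of étale (hence flat) maps, and is therefore faithfully flat. Since a faithfully flat map is injective, this realizes $R$ as a subring over which $R^{\mathrm{h}}$ is faithfully flat, so Proposition~\ref{coherent-1}(4) applies verbatim: if $R^{\mathrm{h}}$ is $F$-coherent then so is $R$. No further argument is needed here.

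For the ``only if'' direction, suppose $R^{\infty}$ is coherent and write the henselization as a filtered colimit $R^{\mathrm{h}} = \varinjlim_{\alpha} S_{\alpha}$ of étale neighborhoods of $(R,\fm)$, where each $S_{\alpha}$ may be taken to be a localization of a ring $R[x]/(f)$ with $f$ monic and all transition maps flat. The crucial point is that perfection commutes with étale base change: étale morphisms are stable under base change along Frobenius (equivalently, the relative Frobenius of an étale morphism is an isomorphism), so $S \otimes_R R^{1/q} \cong S^{1/q}$ for every $q = p^{e}$, and passing to the colimit gives $S \otimes_R R^{\infty} \cong S^{\infty}$ for étale $S$. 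Since forming the perfect closure commutes with filtered colimits, I then obtain
$$
(R^{\mathrm{h}})^{\infty} = \big(\varinjlim_{\alpha} S_{\alpha}\big)^{\infty} \cong \varinjlim_{\alpha} S_{\alpha}^{\infty} \cong \varinjlim_{\alpha} \big(S_{\alpha} \otimes_R R^{\infty}\big) \cong R^{\mathrm{h}} \otimes_R R^{\infty}.
$$
It remains to see that this colimit is coherent. Each term $S_{\alpha} \otimes_R R^{\infty}$ is a localization of $R^{\infty}[x]/(\bar f)$ with $\bar f$ still monic; since $R^{\infty}$ is coherent, the finite free extension $R^{\infty}[x]/(\bar f)$ is a coherent $R^{\infty}$-module, and a short argument tracking generators and relations of a finitely generated ideal through this coherent module shows that the ring $R^{\infty}[x]/(\bar f)$ is itself coherent, as is its localization $S_{\alpha}\otimes_R R^{\infty}$. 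The transition maps stay flat after tensoring with $R^{\infty}$, so $(R^{\mathrm{h}})^{\infty}$ is a flat colimit of coherent rings and is therefore coherent, exactly as in the proof of Proposition~\ref{coherent-1}(1). Hence $R^{\mathrm{h}}$ is $F$-coherent.

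The main obstacle is the identification $(R^{\mathrm{h}})^{\infty} \cong R^{\mathrm{h}} \otimes_R R^{\infty}$, and within it the compatibility $S^{\infty} \cong S \otimes_R R^{\infty}$ for étale $S$; everything else is a matter of assembling flat colimits of coherent rings and invoking the descent of Proposition~\ref{coherent-1}. I would also check at the outset that $R^{\mathrm{h}}$ is reduced, which it is as a filtered colimit of étale extensions of the reduced ring $R$, and that $R^{\infty}$ is local (the Frobenius being a homeomorphism on $\Spec$), so that the colimit descriptions are legitimate.
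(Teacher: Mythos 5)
Your proposal is correct and follows essentially the same route as the paper: faithfully flat descent (Proposition~\ref{coherent-1}(4)) for the ``if'' direction, and for the converse the isomorphism $S \otimes_R R^{\infty} \cong S^{\infty}$ for \'etale $S$, coherence of module-finite free extensions of the coherent ring $R^{\infty}$, and the fact that a flat colimit of coherent rings is coherent applied to the presentation of $R^{\mathrm{h}}$ as a filtered colimit of localizations of \'etale $R$-algebras. The only cosmetic difference is that you package the colimit as the single identification $(R^{\mathrm{h}})^{\infty} \cong R^{\mathrm{h}} \otimes_R R^{\infty}$, whereas the paper verifies $F$-coherence term by term before passing to the limit.
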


\begin{proof}
Since $R \to R^{\mathrm{h}}$ is faithfully flat, $R$ is $F$-coherent, if $R^{\mathrm{h}}$ is so. Conversely, assume that $R$ is $F$-coherent. Then we show that $F$-coherence passes to any finite \'etale extension $R \to S$. Since $S$ is \'etale over $R$, the natural ring homomorphism $R^{1/q}\otimes_{R} S  \to S^{1/q}$ is an isomorphism for all $q > 0$. Hence we have $R^{\infty} \otimes_{R} S \simeq S^{\infty}$. Since $R^{\infty} \otimes_{R} S$ is a module-finite free extension of $R^{\infty}$, it is coherent as well. We also proved that the localization of an $F$-coherent ring is $F$-coherent. Therefore, any standard \'etale extension of $R$ is $F$-coherent. Quite generally, we note that if $R \to S \to T$ is a composition of ring homomorphisms such that $R \to T$ is \'etale and $R \to S$ is unramified, then $S \to T$ is \'etale. In particular, it is flat.

Finally, since the flat colimit of coherent rings is coherent, the henselization of $R$ can be constructed as a colimit of various localizations of module-finite \'etale $R$-algebras, we conclude that $R^{\mathrm{h}}$ is $F$-coherent.
\end{proof}

It is worth suggesting the following list of questions for prompting the research in future.

\begin{Question}
Let $R \to S$ be a faithfully flat map of Noetherian rings such that $R$ is $F$-coherent. Then what conditions on the fibers of $R \to S$ are required in order that $S$ is $F$-coherent?
\end{Question}

We do not know if this question is true for the case where $S$ is smooth over $R$, because we do not even know whether the polynomial ring $R[x]$ is $F$-coherent or not, assuming that $R$ is $F$-coherent.

\begin{Question}
Let $R$ be a Noetherian ring and let $t \in R$ be a non-zero divisor. If $R/tR$ is $F$-coherent, then is $R$ also $F$-coherent?
\end{Question}

This question is not obvious, since the property of coherence is not necessarily inherited from the quotient of a ring by a non-zero divisor. As there is an example of hypersurface ring (a quotient of a regular domain modulo a non-zero element) which is $F$-pure, but not $F$-regular, the converse of the above question does not hold.

\begin{Question}
Let $R$ be an $F$-coherent local ring. Then is the Hilbert-Kunz multiplicity of $R$ rational?
\end{Question}

This question is in connection with the fact that the Hilbert-Kunz multiplicity of a local ring having finite $F$-representation type is rational. It is also interesting to ask:

\begin{Question}
Let $R \subseteq S$ be a pure extension such that $S$ is $F$-coherent. Then is $R$ also $F$-coherent?
\end{Question}

As our examples of $F$-coherent rings always came from the regular rings, it is natural to ask:

\begin{Question}
Is there an example of an $F$-coherent ring whose perfect closure does not coincide with the one which comes from a regular ring?
\end{Question}

\begin{Question}
Suppose $R$ is a Noetherian ring of characteristic $p>0$. Then can one find any practical characterization other than the definition that $R$ is $F$-coherent?
\end{Question}

\begin{Remark}
Let us make some observations about the arithmetic deformation of $F$-coherent rings over $\Spec \mathbb{Z}$ in the one-dimension case. First, we consider $\mathbb{Z}[t^{2},t^{3}] \simeq \mathbb{Z}[x,y]/(x^{3}-y^{2})$. Then in characteristic $p>0$, the ring $(\mathbb{Z}/p\mathbb{Z})[t^{2},t^{3}]$ is $F$-coherent, due to Corollary~\ref{coherent-3}. Hence the generic fiber of $\Spec \mathbb{Z}[x,y]/(x^{3}-y^{2}) \to \Spec \mathbb{Z}$ is supposed to be of $F$-coherent type. On the other hand, let us take $\mathbb{Z}[x,y]/(y^{2}-x^{3}-x^{2})$, an ordinary double point. Then $\mathbb{Z}[x,y]/(y^{2}-x^{3}-x^{2}) \simeq \mathbb{Z}[t,t\sqrt{t+1}]$, so we have that $(\mathbb{Z}/p\mathbb{Z})[t,t\sqrt{t+1}] \to (\mathbb{Z}/p\mathbb{Z})[t,\sqrt{t+1}]$ is the normalization map and purely inseparable only when $p=2$. In view of Corollary~\ref{coherent-5}, the property of fibers being $F$-coherent can be isolated in the arithmetic deformation.
\end{Remark}

\section{Tight closure in perfect rings and coherence}

In this section, we show that Brenner and Monsky's recent example~\cite{BM} that tight closure does not commute with localization is related to the coherence of certain perfect rings. While the failure of coherence for the absolute integral closure was already found by Aberbach and Hochster~\cite{AH}, the main idea discussed in this section has the advantage that it can throw some light on the nature of general perfect rings.

We first establish some basic properties of tight closure of finitely generated ideals on certain perfect rings. Let $A$ be a perfect ring, that is, the Frobenius map is bijective on $A$. Then the Frobenius functor $\mathbf{F}_{A}$ is obviously flat.

\begin{Lemma}
\label{perfect}
Let $A$ be a perfect coherent ring of characteristic $p>0$. Suppose that every finitely generated ideal of $A$ satisfies the Krull's intersection property; that is $\bigcap_{n>0} I^{n}=0$. Then every finitely generated ideal of $A$ is tightly closed.
\end{Lemma}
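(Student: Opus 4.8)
The plan is to fix a finitely generated ideal $I$, take an arbitrary $x \in I^{*}$, and show $x \in I$ by assuming the contrary and producing a contradiction with the Krull intersection hypothesis. The whole argument rests on moving the tight-closure relation $cx^{q} \in I^{[q]}$ into a colon ideal and then comparing Frobenius powers with ordinary powers.

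First I would extract the two structural facts furnished by the hypotheses. Since $A$ is perfect, the Frobenius map is a ring automorphism, so each $A^{(e)}$ is free of rank one as a right $A$-module and the Frobenius functor $\mathbf{F}^{e}_{A}$ is flat. Applying $\mathbf{F}^{e}_{A}$ to the injective map $A/(I:x) \xrightarrow{\,x\,} A/I$ (injective precisely by the definition of the colon ideal) then yields an injection $A/(I:x)^{[q]} \xrightarrow{\,x^{q}\,} A/I^{[q]}$, and reading off its vanishing kernel gives the colon formula
\[
(I^{[q]} :_{A} x^{q}) = (I :_{A} x)^{[q]} \qquad (q = p^{e}).
\]
Second, since $A$ is coherent and $I$ is finitely generated, I would check that $J := (I :_{A} x)$ is itself finitely generated: writing $xA \cap I$ (finitely generated, as $A$ is coherent) as $(xa_{1},\ldots,xa_{m})$, one verifies $J = (a_{1},\ldots,a_{m}) + (0 :_{A} x)$, and $(0 :_{A} x)$ is finitely generated by coherence.

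With these in hand the endgame is short. If $x \in I^{*}$, there is $c \in A^{0}$ with $cx^{q} \in I^{[q]}$ for all $q = p^{e} \gg 0$, which by the colon formula says exactly $c \in J^{[q]}$ for all large $q$. Using the elementary containment $J^{[q]} \subseteq J^{q}$ together with $J^{q} \subseteq J^{n}$ for $q \ge n$, I would conclude $c \in \bigcap_{n>0} J^{n}$. Assuming $x \notin I$ makes $J$ a proper finitely generated ideal, so the Krull hypothesis forces $\bigcap_{n>0} J^{n} = 0$ and hence $c = 0$; but $A$ is perfect, hence reduced, so an element of $A^{0}$ cannot vanish, a contradiction. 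Thus $x \in I$, and every finitely generated ideal is tightly closed.

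The step I expect to be the main obstacle, and the one that genuinely uses perfectness, is the colon formula $(I^{[q]} :_{A} x^{q}) = (I :_{A} x)^{[q]}$: it is precisely flatness of Frobenius that converts the tight-closure membership into the clean statement $c \in J^{[q]}$, after which the comparison $J^{[q]} \subseteq J^{q}$ and the Krull property do the rest. The subsidiary point needing care is the finite generation of $J = (I:x)$, since the Krull hypothesis is available only for finitely generated ideals; this is exactly where coherence enters.
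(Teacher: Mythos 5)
Your proposal is correct and follows essentially the same route as the paper: assume $z \in I^{*}\setminus I$, use flatness of Frobenius on the perfect ring to identify $(I^{[q]}:_{A}z^{q})$ with $(I:_{A}z)^{[q]}$, use coherence to see that $(I:_{A}z)$ is a proper finitely generated ideal, and then trap $c$ in $\bigcap_{n}(I:_{A}z)^{n}=0$ via $J^{[q]}\subseteq J^{q}$. Your write-up is in fact slightly more careful than the paper's on two minor points (justifying finite generation of the colon ideal explicitly, and handling the fact that $c\in J^{[q]}$ only for $q\gg 0$).
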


\begin{proof}
Let $I$ be any finitely generated ideal and let $z \in I^{*}$. Then there exists $c \in A^{0}$ such that $cz^{[q]} \in I^{[q]}$ for $q=p^{e} \gg 0$. If we assume $z \notin I$, then $c \in (I^{[q]}:_{A}z^{[q]})$ is a non-zero proper ideal of $A$. By the flatness of $\mathbf{F}_{A}$, we have
$$
(I:_{A}z)^{[q]}=(I^{[q]}:_{A}z^{[q]}),
$$ 
which is finitely generated, due to (\cite{Glaz}, Theorem 2.3.2). Hence
$$
c \in \bigcap_{q>0} (I:_{A}z)^{[q]} \subseteq \bigcap_{q>0} (I:_{A}z)^{q}=0.
$$
But this is a contradiction.
\end{proof}

\begin{Lemma}
\label{Krull}
Let $R$ be any Noetherian domain and let $I$ be its proper ideal. Then $R^{+}$ is $I$-adically separated.
\end{Lemma}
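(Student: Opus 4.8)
The plan is to produce a single valuation on $R^{+}$ that is nonnegative everywhere and strictly positive on $I$, and then to observe that membership in $I^{n}R^{+}$ forces the valuation of an element to be at least $n$; since the value group sits inside $\mathbb{Q}$, any element whose valuation exceeds every integer must vanish. This is exactly the valuative method advertised in the Discussion, specialized to $S=R^{+}$.

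We may assume $I \neq 0$, the case $I=0$ being trivial. First I would pick a prime $P$ of $R$ with $I \subseteq P$, possible since $I$ is proper; as $I \neq 0$, such a $P$ is automatically nonzero. Invoking the Discussion, there is a discrete valuation ring $(V,tV)$ attached to $(R^{+},P)$: we have $R \subseteq V \subseteq K$, $P = R \cap tV$, and the attached valuation $v$ extends to the integral extension $R^{+}$ with value group contained in $\mathbb{Q}$. The two structural facts I want to record are: (i) every element of $R^{+}$ lies in the valuation ring of $v$, so $v \geq 0$ on $R^{+}$, because $R^{+}$ is integral over $R \subseteq V$ and the valuation ring of the extended $v$ in $\overline{K}$ is integrally closed; and (ii) every nonzero $a \in I \subseteq P = R \cap tV$ satisfies $v(a) \geq v(t) = 1$.

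Next I would take an arbitrary $z \in \bigcap_{n>0} I^{n}R^{+}$ and bound $v(z)$ from below. For each $n$, writing an element of $I^{n}R^{+}$ as an $R^{+}$-linear combination of $n$-fold products of elements of $I$, fact (ii) gives that each such product has valuation $\geq n$, while fact (i) gives that the $R^{+}$-coefficients have valuation $\geq 0$; the ultrametric inequality then yields $v(z) \geq n$. Since $z$ is a fixed element, $v(z)$ is a single well-defined value in $\mathbb{Q} \cup \{\infty\}$, and $v(z) \geq n$ for every $n$ forces $v(z) = \infty$, i.e. $z = 0$. Hence $\bigcap_{n>0} I^{n}R^{+} = 0$, which is precisely the asserted $I$-adic separatedness.

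I expect the only genuinely delicate point to be the justification of (i), that the extension of $v$ to the absolute integral closure remains nonnegative on all of $R^{+}$; but this is exactly the extension property recorded in the Discussion combined with the integral closedness of a valuation ring, so no new work is needed. The remaining steps are routine valuation estimates, and a single valuation suffices — there is no need to vary $P$ or to intersect over several valuations.
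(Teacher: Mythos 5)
Your proof is correct and follows essentially the same valuative route as the paper: choose a discrete valuation centered on a prime containing $I$, extend it to $R^{+}$ with values in $\mathbb{Q}$, and conclude that any element of $\bigcap_{n}I^{n}R^{+}$ has infinite value. The only cosmetic difference is that the paper first passes to a module-finite extension domain $S$ of $R$ before attaching the valuation, whereas you center the valuation on a prime of $R$ containing $I$ directly, which suffices.
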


\begin{proof}
If we assume the contrary, there is a non-zero element $c \in \bigcap_{n>0} I^{n}R^{+}$. Extend $R$ to its module-finite extension domain $S$ so that $c \in IS$, and we may choose a prime ideal $P$ of $S$ such that $IS \subseteq P$ and a discrete valuation ring $(V,tV)$ attached to $(S,P)$. Then we must have $c \in \bigcap_{n>0} I^{n}R^{+} \cap V \subseteq \bigcap_{n>0} t^{n}V^{+} \cap V=\bigcap_{n>0} t^{n}V=0$, which is a contradiction.
\end{proof}

The following proposition looks similar to Proposition~\ref{coherent-7}. However, the difference is that Proposition~\ref{coherent-7} addresses consequences on the tight closure of ideals on Noetherian rings, while the following does so on perfect rings rather than Noetherian rings.

\begin{Proposition}
Let $R$ be a Noetherian domain of characteristic $p>0$. Suppose that $S$ is any perfect domain that is integral over $R$. If there is a finitely generated ideal of $S$ that is not tightly closed, then $S$ is not coherent.
\end{Proposition}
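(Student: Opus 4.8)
The plan is to prove the contrapositive: assuming that $S$ is coherent, I will show that every finitely generated ideal of $S$ is tightly closed. Since $S$ is a perfect domain, it is a perfect coherent ring, so by Lemma~\ref{perfect} it suffices to verify the Krull intersection property $\bigcap_{n>0} I^{n}=0$ for every finitely generated \emph{proper} ideal $I \subsetneq S$. (The unit ideal is trivially tightly closed, and in the proof of Lemma~\ref{perfect} the intersection property is invoked only for the proper ideals $(I:_{S}z)$ arising when $z \notin I$.) Thus the entire argument reduces to establishing $I$-adic separatedness of $S$ for finitely generated proper ideals.

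To do this I would descend to the Noetherian setting, where Lemma~\ref{Krull} is available. Write $I=(a_{1},\ldots,a_{m})S$ with $a_{i}\in S$ and set $R':=R[a_{1},\ldots,a_{m}]\subseteq S$. Since $S$ is integral over $R$, each $a_{i}$ is integral over $R$, so $R'$ is module-finite over the Noetherian domain $R$; hence $R'$ is itself a Noetherian domain, a domain because $R'\subseteq S$. Put $I':=(a_{1},\ldots,a_{m})R'$. If $I'$ were the unit ideal we would obtain $1\in I$, contradicting that $I$ is proper; so $I'$ is a proper ideal of the Noetherian domain $R'$.

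Next I would pass to absolute integral closures. Fixing an embedding of $\mathrm{Frac}(S)$ into an algebraic closure of $\mathrm{Frac}(R)$, the integrality of $S$ over $R$ gives $S\subseteq R^{+}$, and since $R'$ is integral over $R$ we have $(R')^{+}=R^{+}$. Because $I=I'S$ shares its generators with $I'$, we get $I^{n}=(I')^{n}S\subseteq (I')^{n}R^{+}=(I')^{n}(R')^{+}$ for every $n$. Now Lemma~\ref{Krull}, applied to the proper ideal $I'$ of the Noetherian domain $R'$, yields $\bigcap_{n>0}(I')^{n}(R')^{+}=0$. Using $S\subseteq R^{+}$ we conclude $\bigcap_{n>0} I^{n}\subseteq \bigcap_{n>0}(I')^{n}(R')^{+}=0$, which is exactly what was required; Lemma~\ref{perfect} then finishes the proof.

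The step demanding the most care is this reduction to $R'$. Lemma~\ref{Krull} is stated only for ideals generated \emph{inside} a Noetherian domain, whereas the generators $a_{i}$ of $I$ a priori live only in the (generally non-Noetherian) perfect ring $S$. Enlarging $R$ by the finitely many generators to a module-finite, hence Noetherian, subalgebra $R'$ is precisely the device that makes Lemma~\ref{Krull} applicable; one must verify that this enlargement keeps the ideal proper and that $(R')^{+}=R^{+}$, so that the separation statement in $(R')^{+}$ transfers back to $S$. Everything else is a direct transcription of the valuation-theoretic argument already encapsulated in Lemmas~\ref{perfect} and~\ref{Krull}.
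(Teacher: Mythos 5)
Your proposal is correct and follows essentially the same route as the paper: both arguments descend to the module-finite Noetherian subring $R'$ generated by the generators of $I$, apply Lemma~\ref{Krull} there to get $\bigcap_{n>0} I^{n} \subseteq \bigcap_{n>0}(I')^{n}R^{+}=0$, and then invoke Lemma~\ref{perfect}. Your version merely spells out the details the paper leaves implicit (properness of $I'$, the identification $(R')^{+}=R^{+}$, and the observation that Lemma~\ref{perfect} only needs separatedness for proper ideals), all of which are accurate.
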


\begin{proof}
Note first that $R \subseteq S \subseteq R^{+}$. Let $I$ be any finitely generated ideal of $S$. Then extend $R$ to its module-finite extension domain $R'$ such that the generators of $I$ are contained in the Noetherian domain $R'$. Then applying Lemma \ref{Krull}, we obtain $\bigcap_{n>0} I^{n}S \subseteq \bigcap_{n>0} I^{n}R^{+}=0$. The proposition now follows from Lemma~\ref{perfect}.
\end{proof}

We end this section with the following example to use the results in this section.

\begin{Example}[Brenner-Monsky]
Recently, an example of a Noetherian domain $R$ of characteristic 2 together with an ideal $I$ such that $I^{+} \ne I^{*}$, was constructed. Here $I^{+}=I R^{+} \cap R$ is the plus closure of $I$. Let $L=\overline{(\mathbb{Z}/2\mathbb{Z})}$, the algebraic closure of $\mathbb{Z}/2\mathbb{Z}$, and
$$
R:=R_{t}=L[x,y,z,t]/(g_{t}),
$$
where $g_{t}=z^{4}+xyz^{2}+x^{3}z+y^{3}z+tx^{2}y^{2}$. Let also $S=L[t]-\{0\}$. The ring $R$ is a three-dimensional normal hypersurface ring. Then they show that for an ideal $I=(x^{4},y^{4},z^{4})$, $y^{3}z^{3} \in (S^{-1}I)^{*}$, but $y^{3}z^{3}$ is not in $S^{-1} (I^{*})$. Quite generally, it is known that $I^{+} \subseteq I^{*}$ (\cite{BH}, Remarks 10.1.6). By definition, plus closure commutes with localization, so that $I^{+} \ne I^{*}$. Their example, in fact, shows that $I^{+} \subsetneq I^{*}$ and thus $R$ is not $F$-coherent. Assume $R^{+}$ is coherent. Then we have by Lemma \ref{perfect} that $IR^{+}=(IR^{+})^{*}$. Therefore:
$$
I^{+}=IR^{+} \cap R \subseteq I^{*}R^{+} \cap R \subseteq (IR^{+})^{*} \cap R=I^{+}, 
$$
which yields a contradiction $I^{*} \subseteq I^{*} R^{+} \cap R=I^{+}$. Here, we used the fact that the tight closure persists in $R^{+}$ from $R$. Hence this shows that $R^{+}$ is not coherent. Of course, this fact follows easily from (\cite{AH}, Proposition 2.3), while our argument uses tight closure on $R^{+}$ directly.
\end{Example}

\end{document}